\tikzstyle{vertex}=[inner sep = 0pt, minimum width=4pt, fill=black, shape=circle]
\newcommand{\gpoint}[2]{\node[style=vertex, label=#1:$#2$]}
\newcommand{\bpoint}[1]{\gpoint{below}{#1}}
\newcommand{\rpoint}[1]{\gpoint{right}{#1}}
\newtheorem{theorem}{Theorem}
\newtheorem{lemma}[theorem]{Lemma}
\newtheorem{corollary}[theorem]{Corollary}
\theoremstyle{remark}
\newtheorem{question}{Question}
\newtheorem{openquestion}{Open Question}
\newcommand{\floor}[1]{\left\lfloor #1 \right\rfloor}
\newcommand{\mth}{\Theta}
\title{Some results on Multithreshold Graphs}
\author{Gregory J.~Puleo}
\begin{document}
\begin{abstract}
  Jamison and Sprague defined a graph $G$ to be a \emph{$k$-threshold
    graph} with thresholds $\theta_1 , \ldots, \theta_k$ (strictly
  increasing) if one can assign real numbers $(r_v)_{v \in V(G)}$,
  called \emph{ranks}, such that for every pair of vertices $v,w$, we
  have $vw \in E(G)$ if and only if the inequality
  $\theta_i \leq r_v + r_w$ holds for an odd number of indices $i$.
  When $k=1$ or $k=2$, the precise choice of thresholds
  $\theta_1, \ldots, \theta_k$ does not matter, as a suitable
  transformation of the ranks transforms a representation with one
  choice of thresholds into a representation with any other choice of
  thresholds. Jamison asked whether this remained true for $k \geq 3$
  or whether different thresholds define different classes of graphs
  for such $k$, offering \$50 for a solution of the problem.  Letting
  $C_t$ for $t > 1$ denote the class of $3$-threshold graphs with thresholds $-1, 1, t$,
  we prove that there are infinitely many distinct classes $C_t$, answering
  Jamison's question. We also consider some other problems on multithreshold
  graphs, some of which remain open.
\end{abstract}
\maketitle
\section{Introduction}
Multithreshold graphs were introduced by Jamison and
Sprague~\cite{jamison-sprague} as a generalization of the well-studied
threshold graphs, first introduced by Chv\'atal and
Hammer~\cite{chvatal-hammer}. Given real numbers
$\theta_1, \ldots, \theta_k$ with
$\theta_1 < \theta_2 < \cdots < \theta_k$, we say that a simple graph
$G$ is \emph{a $k$-threshold graph with thresholds
$\theta_1, \ldots, \theta_k$} if there exist real numbers
$(r_v)_{v \in V(G)}$, called \emph{ranks}, such that for every pair of
distinct vertices $v,w \in V(G)$, we have $vw \in E(G)$ if and only if
the inequality $\theta_i \leq r_v + r_w$ holds for an odd number of
indices $i$. (Equivalently, adopting the convention that
$\theta_{k+1} = \infty$, we want $vw \in E(G)$ if and only if
$r_v + r_w \in [\theta_{2i-1}, \theta_{2i})$ for some $i$.)
In this case, we call $r$ a \emph{$(\theta_1, \ldots, \theta_k)$-representation of $G$}.

We will abbreviate this notation by saying that $G$ is
$(\theta_1, \ldots, \theta_k)$-threshold to mean that $G$ is
$k$-threshold with thresholds $\theta_1, \ldots, \theta_k$.
When $k=1$, we obtain the classical threshold graphs. 

In the case of the classical threshold graphs, it is clear that the
exact choice of threshold does not matter: by appropriately rescaling
the vertex ranks, any $\theta$-threshold graph is seen to also be a
$\theta'$-threshold graph.  The same observation holds for $k=2$: any
ranks witnessing that $G$ is $(\theta_1, \theta_2)$-threshold can be
transformed, via an appropriate affine transformation, into ranks
witnessing that $G$ is $(\theta'_1, \theta'_2)$-threshold.

At the 2019 Spring Sectional AMS Meeting in Auburn, Jamison asked whether
this phenomenon continues for higher $k$, and specifically whether it
still holds when $k=3$. Observing that an affine transformation of the
vertex ranks still uses up two ``degrees of freedom'' and let us
express any $(\theta_1, \theta_2, \theta_3)$-threshold graph as a
$(-1, 1, t)$-threshold graph for some $t$, his question can be phrased
as follows.
\begin{question}[Jamison]\label{qu:jamison}
  Do there exist real numbers $t, t' > 1$ such that the class of $(-1, 1, t)$-threshold graphs
  and the class of $(-1, 1, t')$-threshold graphs differ?
\end{question}
Jamison offered a \$50 bounty for an answer to this question. In this
paper, we answer the question in the affirmative: letting $C_t$ denote
the class of $(-1, 1, t)$-threshold graphs, we prove in Section~\ref{sec:11t} that there are
infinitely many distinct classes $C_t$.

We also study some other questions involving multithreshold graphs.
Say that $G$ is a \emph{$k$-threshold graph} if there exist real
numbers $\theta_1 < \cdots < \theta_k$ such that $G$ is a
$(\theta_1, \ldots, \theta_k)$-threshold graph.  Jamison and
Sprague~\cite{jamison-sprague} proved that for every graph $G$, there
is some $k$ such that $G$ is a $k$-threshold graph. Thus, we may
define the \emph{threshold number} $\mth(G)$ of a graph $G$
to be the smallest nonnegative $k$ such that $G$ is a $k$-threshold graph.

It is natural to compare the parameter $\mth(G)$ to other graph
parameters involving threshold graphs. Cozzens and
Leibowitz~\cite{cozzens} define the \emph{threshold dimension} $t(G)$
of a graph $G$ to be the smallest nonnegative integer $k$ such that
$G$ can be expressed as the union of $k$ threshold graphs.  Since the
complement of a threshold graph is a threshold graph, we can also view
$t(\overline{G})$ as the smallest nonnegative $k$ such that $G$ can be
expressed as the \emph{intersection} of $k$ threshold graphs.

Doignon observed, in a personal communication with the authors of
\cite{js-bithreshold}, that any $2$-threshold graph is the
intersection of two threshold graphs, hence $t(\overline{G}) \leq 2$
whenever $\Theta(G) \leq 2$.  This observation suggests a possible
converse:
\begin{question}[Jamison]\label{qu:2threshdim}
  Replacing $t(\overline{G})$ with $t(G)$, does $\Theta(G) \leq 2$ imply any bound on $t(G)$?  
\end{question}
\begin{question}[Jamison]\label{qu:thetat}
  Is $\Theta(G)$ bounded by any function of $t(\overline{G})$ or of $t(G)$?  
\end{question}
Question~\ref{qu:2threshdim} has a brief answer. For any graph $G$ and
positive integer $p$, let $pG$ be the disjoint union of $p$ copies of
$G$.  The graph $pK_2$ evidently has $t(G) = p$, since $2K_2$ is a
forbidden induced subgraph for a threshold graph; on the other hand,
$pK_2$ is a $(-1, 1)$-threshold graph, as witnessed by giving the
endpoints $u_i$ and $v_i$ of the $i$th edge ranks $r(u_i) = -2i$ and
$r(v_i) = 2i$. Hence there are graphs with $\Theta(G) = 2$ for which
$t(G)$ is arbitrarily large.

In Section~\ref{sec:threshdim}, we partially answer by proving that
there are graphs with $t(\overline{G}) = 3$ for which $\Theta(G)$ is
arbitrarily large. Finally, in Section~\ref{sec:openquestions}, we
discuss some remaining open problems about multithreshold graphs,
along the lines of the questions considered in this paper.

\section{Distinct families of $(-1, 1, t)$-threshold graphs}\label{sec:11t}
To facilitate proofs about multithreshold graphs, we introduce some
notational conventions. Given a multithreshold representation of a graph $G$,
the \emph{weight} of an edge or non-edge $uv$ is the sum of the ranks of $u$ and $v$.
When it is understood which multithreshold representation we are working with,
we will omit the function $r$ and simply write $v$ to stand for the
rank of the vertex $v$. (Hence, the weight of an edge $uv$ will simply
be written as $u+v$.)

For positive integers $p$, let $G_p = pK_2$. 
\begin{lemma}
  For any $p \geq 2$ and any $t > 2p-3$, the graph $G_p$
  is a $(-1, 1, t)$-threshold graph.
\end{lemma}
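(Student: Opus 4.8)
The plan is to exhibit an explicit $(-1,1,t)$-representation of $G_p$. Write the $2p$ vertices of $G_p = pK_2$ as $u_1, v_1, \ldots, u_p, v_p$, where the edges are exactly the pairs $u_iv_i$. Since $t > 2p-3 \geq 1$, we may fix a real number $d$ with $1 < d < t/(2p-3)$, and I would set $r(u_i) = -(i-1)d$ and $r(v_i) = (i-1)d$ for each $i$. The remaining task is to check that, with these ranks, a pair $xy$ of distinct vertices has weight $r_x + r_y \in [-1,1) \cup [t,\infty)$ if and only if $xy$ is one of the edges $u_iv_i$.

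The verification splits into the obvious cases, all of which I expect to be routine. Each edge $u_iv_i$ has weight $-(i-1)d + (i-1)d = 0 \in [-1,1)$. For $i \neq j$, the non-edge $u_iu_j$ has weight $-(i+j-2)d \leq -d < -1$, so it lies outside $[-1,1) \cup [t,\infty)$; the non-edge $v_iv_j$ (say $i<j$) has weight $(i+j-2)d$, which ranges between $d$ and $(2p-3)d$ and so lies in $[1,t)$ by the choice of $d$; and the non-edge $u_iv_j$ ($i \neq j$) has weight $(j-i)d$, which is at most $-d < -1$ when $j < i$, and lies between $d$ and $(p-1)d \leq (2p-3)d$, hence in $[1,t)$, when $j > i$. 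In every case the weight of a non-edge avoids $[-1,1) \cup [t,\infty)$ while every edge weight lies inside it, which is exactly what is required for $r$ to be a $(-1,1,t)$-representation of $G_p$.

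I do not expect a serious obstacle; the only real point is to choose the representation correctly. The naive $(-1,1)$-representation of $pK_2$ recalled in the introduction (ranks $-2i$ and $2i$) fails here because the weights $2i+2j$ of the non-edges $v_iv_j$ grow large and would cross the threshold $t$. The fix is to compress the ranks by using a common spacing $d$ just above $1$ rather than $2$: this keeps every matching weight equal to $0$, keeps all other positive weights (the cross non-edges $u_iv_j$ with $j>i$ and the same-class non-edges $v_iv_j$) safely inside $[1,t)$, and — because $d>1$ — keeps all negative weights strictly below $-1$. The extremal non-edge is $v_{p-1}v_p$, with weight $(2p-3)d$, which is precisely the reason the hypothesis must be $t > 2p-3$.
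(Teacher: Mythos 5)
Your proposal is correct and is essentially the paper's own construction: the paper uses ranks $\pm(1+\epsilon)(i-1)$ with $t=(1+2\epsilon)(2p-3)$, which is exactly your spacing $d=1+\epsilon \in (1, t/(2p-3))$. Your case-by-case verification of the non-edge weights matches the paper's (slightly more compressed) check that every non-edge weight has absolute value at least $1+\epsilon$ and value at most $(1+\epsilon)(2p-3)<t$.
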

\begin{proof}
  Write $t = (1+2\epsilon)(2p-3)$ with $\epsilon > 0$.  Letting
  $a_i, b_i$ be the endpoints of the $i$th edge for
  $i=1, \ldots, p$, observe that the following ranks yield
  a $(-1,1,t)$-threshold representation of $G_p$:
  \begin{itemize}
  \item $a_i = -(1+\epsilon)(i-1)$ for $i = 1, \ldots, p$,
  \item $b_i = (1+\epsilon)(i-1)$ for $i = 1, \ldots, p$.
  \end{itemize}
  Evidently $a_i + b_i = 0$ for every edge $a_ib_i$. On the other
  hand, any nonadjacent pair of vertices has a weight whose
  absolute value is at least $1+\epsilon$, hence does not fall into
  the interval $[-1, 1)$, and whose value is at most
  $(1+\epsilon)(p-1) + (1+\epsilon)(p-2) = (1+\epsilon)(2p-3) < t$, hence
  does not fall into the interval $[t, \infty)$. Hence, this is
  a $(-1,1,t)$-representation of $G$.
\end{proof}
Computational experiments suggest that this bound is sharp:
that $G_p$ is not $(-1,1,t)$-threshold for any $t \leq 2p-3$.
Lacking a formal proof of this sharpness, we prove a weaker
statement.
\begin{lemma}\label{lem:gp-lower}
  For integer $p \geq 4$, if $G_p$ is a $(-1,1,t)$-threshold graph
  then $t > 2p-5$.
\end{lemma}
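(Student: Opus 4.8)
The plan is to show that any $(-1,1,t)$-representation of $G_p$ forces $t > 2p-4$; since $2p-4 > 2p-5$, this proves the lemma (and slightly more). Recall that in such a representation an edge carries weight in $[-1,1) \cup [t,\infty)$ while a non-edge carries weight in $(-\infty,-1) \cup [1,t)$. Call an edge of $G_p$ \emph{big} if its weight is at least $t$ and \emph{small} if its weight lies in $[-1,1)$. The first step is to observe that $G_p$ has at most one big edge: if $a_1b_1$ and $a_2b_2$ were both big then $(a_1+a_2)+(b_1+b_2) = (a_1+b_1)+(a_2+b_2) \geq 2t$, but $a_1a_2$ and $b_1b_2$ are non-edges, so the left-hand side is less than $2t$, a contradiction. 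Hence at least $q := p-1$ edges are small; relabel so that $a_1b_1, \ldots, a_qb_q$ are the small edges, let $c_i \geq d_i$ denote the endpoints of the $i$th small edge, and set $x_i := c_i + d_i \in [-1,1)$ and $u_i := c_i - d_i \geq 0$, the \emph{width} of that edge.

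The heart of the argument is a width-separation estimate for small edges. Fix distinct small edges $i$ and $j$. The four cross pairs $a_ia_j, a_ib_j, b_ia_j, b_ib_j$ are all non-edges, and they split into two two-element sets, $\{a_ia_j, b_ib_j\}$ and $\{a_ib_j, b_ia_j\}$, in each of which the two weights sum to $x_i + x_j \in [-2,2)$. Since each of the four weights is a non-edge weight (hence either $<-1$ or in $[1,t)$) and a pair of such weights summing to a value in $[-2,2)$ can have neither both members $\geq 1$ nor both members $<-1$, each of the two sets contributes exactly one weight below $-1$ and one weight in $[1,t)$. In particular the largest of the four cross-sums, namely $c_i+c_j$, lies in $[1,t)$, so $c_i+c_j < t$; and one of $c_i+d_j$, $d_i+c_j$ is $\geq 1$ while the other is $<-1$, whence $|u_i - u_j| = |(c_i+d_j)-(d_i+c_j)| > 2$. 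Thus the widths $u_1, \ldots, u_q$ are pairwise more than $2$ apart, and being nonnegative, after reindexing so that $u_1 > u_2 > \cdots > u_q$ we obtain $u_1 > 2(q-1)$ and $u_2 > 2(q-2)$.

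If there is no big edge then $q=p$, and since $c_i = (x_i+u_i)/2 \geq (-1+u_i)/2$, we get $c_1 > (2p-3)/2$ and $c_2 > (2p-5)/2$, so $t > c_1+c_2 > 2p-4$, as desired. If instead a big edge $a_pb_p$ is present, with endpoints $c_p \geq d_p$ and $c_p+d_p \geq t$, I would extract a stronger bound from it. For any small edge $i$, the non-edge weights $a_i+a_p$ and $b_i+b_p$ sum to $x_i+(c_p+d_p) \geq t-1$ and each is less than $t$, so each exceeds $-1$ and hence (being a non-edge weight) lies in $[1,t)$; the same holds for $a_i+b_p$ and $b_i+a_p$. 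So all four cross-sums between edge $i$ and edge $p$ lie in $[1,t)$, which confines both $c_i$ and $d_i$ to the interval $[\,1-d_p,\, t-c_p\,)$, of length $t-1-(c_p-d_p) \leq t-1$; thus $u_i < t-1$ for every small edge $i$. Combining with $u_1 > 2(q-1) = 2(p-2)$ gives $t > 2p-3$. Either way $t > 2p-4 > 2p-5$.

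The argument is elementary, so I do not expect a serious obstacle, but the step requiring the most care is the width-separation estimate: one must verify the combinatorics of the four cross-sums (which one is largest, which pairs sum to $x_i+x_j$) and track the strict versus non-strict inequalities at the boundary values $-1$, $1$, and $t$. The one genuinely new idea is recognizing that the structure of those cross-sums forces the edge widths to be mutually far apart; once that is in hand, the remaining estimates are routine, and the separate handling of the (at most one) big edge is a minor bookkeeping matter.
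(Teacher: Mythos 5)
Your proof is correct and follows essentially the same strategy as the paper's: both arguments reduce to showing that the widths of the matching edges whose weight lies in $[-1,1)$ must be pairwise separated by more than $2$ (your $|u_i-u_j|>2$ is the paper's Claims 2--3 in symmetric form), and then that the two largest ranks lie on distinct edges whose cross-sum is forced below $t$. Your pairwise derivation avoids the paper's nesting/induction step, and your different handling of the single big edge even yields the marginally stronger bound $t>2p-4$.
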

\begin{proof}  
  View the edges whose weight lies in $[-1,1)$ as colored red
  and view the edges whose weight lies in $[t, \infty)$ as colored yellow.
  Since the yellow edges form a threshold graph and $2K_2$ is a forbidden
  induced subgraph for threshold graphs, there is at most one yellow edge in $G_p$.
  Let $a_1b_1$, $a_2b_2$, \ldots, $a_pb_p$ be the edges of $G_p$.

  By symmetry, we may assume that $a_i \leq b_i$ for each $i$ and that
  $b_1 \leq \cdots \leq b_p$.  This implies that $b_p$ has the largest
  rank of all vertices and, thus, if there is a yellow edge, then
  that edge is $a_pb_p$.

  Let $q=p$ if $a_pb_p$ is red, and otherwise let $q=p-1$, so that all
  edges $a_1b_1, \ldots, a_qb_q$ are red.
  
  \textbf{Claim 1:} \emph{$a_k < a_j$ whenever $j < k \leq q$.} If not,
  then there exist $j < k$ with $a_k \geq a_j$ and $b_j \leq b_k$.
  Hence
  \[ a_k + b_j \geq a_j + b_j \geq -1, \]
  and
  \[ a_k + b_j \leq a_k + b_k < 1, \]
  which contradicts the fact that the edge $a_kb_j$ is absent. \bigskip

  It follows that the intervals $[a_i, b_i]$ are nested, with
  $[a_1, b_1] \subset [a_2, b_2] \subset \cdots \subset [a_{q}, b_{q}]$.

  \textbf{Claim 2:} \emph{$a_j + b_k \geq 1$ and $a_k + b_j < -1$
    whenever $j < k \leq q$.} Using the previous claim, we have
  \[ a_j + b_k \geq a_k + b_k \geq -1, \]
  hence $a_j + b_k \geq 1$ since otherwise the edge $a_jb_k$ should be present.
  Similarly, since $b_j \leq b_k$, we have
  \[ a_k + b_j \leq a_k + b_k < 1, \]
  hence $a_k + b_j < -1$ since otherwise the edge $a_kb_j$ should be present.\bigskip

  \textbf{Claim 3:} \emph{$b_j - a_j \geq 2(j-1)$ for all
    $j \in [q]$.}  We prove this by induction on $j$. When $j=1$ this
  is just the assumption that $b_j \geq a_j$. Assuming it holds for $j-1$,
  we prove that it holds for $j$. Observe that
  \[
    (b_j - a_j) - (b_{j-1} - a_{j-1}) = (a_{j-1} + b_j) - (a_j + b_{j-1}),
  \]
  and by the previous claim we have $a_{j-1} + b_j \geq 1$ and $a_j + b_{j-1} \leq -1$,
  so that
  \[ b_j - a_j \geq (b_{j-1} - a_{j-1}) + 2 \geq 2(j-2) + 2 = 2(j-1). \]
  \bigskip

  \textbf{Claim 4:} \emph{$b_j \geq j - 3/2$ for all $j \in [q]$.}
  This follows immediately from the inequalities
  \begin{align*}
    b_j - a_j &\geq 2j-2, \\
    b_j + a_j &\geq -1.
  \end{align*}\bigskip

  Having established these claims, we now complete the proof. If
  $q=p$, then Claim~4 gives $b_{p-1} \geq p - 5/2$ and
  $b_p \geq p - 3/2$, so to avoid the unwanted edge $b_{p-1}b_p$, it
  is necessary that $b_{p-1} + b_p < t$, which requires
  $(p-5/2) + (p-3/2) = 2p-4 < t$.

  If $q=p-1$, then Claim~4 gives $b_{p-1} \geq p - 5/2$, and since $a_p + b_p \geq t$
  with $a_p \leq b_p$, we have $b_p \geq t/2$. Hence, to avoid the unwanted edge
  $b_{p-1}b_p$ it is necessary that $(p-5/2) + t/2 < t$, which implies $2p-5 < t$.
\end{proof}
\begin{corollary}
  For each $k \geq 3$, the graph $G_{2^k}$ is $(-1,1,2^{k+1})$-threshold but not $(-1,1,t)$-threshold for any $t \leq 2^k$.
\end{corollary}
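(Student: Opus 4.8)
The plan is to obtain the corollary as an immediate consequence of the two lemmas proved above, specialized to $p = 2^k$. First I would record that $k \geq 3$ forces $p := 2^k \geq 8$, so in particular $p \geq 2$ and $p \geq 4$; hence both the first lemma and Lemma~\ref{lem:gp-lower} apply to $G_p = G_{2^k}$.

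For the constructive half of the statement, I would invoke the first lemma with this value of $p$: it yields that $G_{2^k}$ is $(-1,1,t)$-threshold for every $t > 2p-3 = 2^{k+1}-3$, and since $2^{k+1} > 2^{k+1}-3$ we may take $t = 2^{k+1}$. For the other half, I would use the contrapositive of Lemma~\ref{lem:gp-lower}: if $G_{2^k}$ is $(-1,1,t)$-threshold, then $t > 2p-5 = 2^{k+1}-5$. Thus it suffices to check that no $t \leq 2^k$ exceeds $2^{k+1}-5$, i.e.\ that $2^k \leq 2^{k+1}-5$; this rearranges to $2^k \geq 5$, which holds exactly because $k \geq 3$. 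This is, incidentally, the only place where the hypothesis $k \geq 3$ (rather than merely $k \geq 2$) is used.

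I do not expect any genuine obstacle here: all the real content lives in the two lemmas, and the corollary only requires confirming that the constructive bound $2^{k+1}-3$ and the obstruction bound $2^{k+1}-5$ are close enough to isolate the value $2^k$ from $2^{k+1}$ once $k \geq 3$. The only care needed is in tracking strict versus non-strict inequalities and in noting that the hypothesis $p \geq 4$ of Lemma~\ref{lem:gp-lower} is satisfied. As a closing remark, iterating the corollary shows that the classes $C_{2^k}$ for $k \geq 3$ are pairwise distinct --- given $3 \leq a < b$, the graph $G_{2^{b-1}}$ lies in $C_{2^b}$ by the first lemma but not in $C_{2^a}$ by the corollary, since $2^a \leq 2^{b-1}$ --- which is what yields the infinitely many distinct classes $C_t$ advertised in the introduction.
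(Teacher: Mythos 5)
Your proof is correct and matches the paper's intent exactly: the paper states this corollary without proof precisely because it follows immediately from the two lemmas by the arithmetic you carry out ($2^{k+1} > 2^{k+1}-3$ for the positive half, and $2^k \leq 2^{k+1}-5$ when $k \geq 3$ for the negative half). Your closing remark also correctly reproduces the paper's next corollary on the distinctness of the classes $C_{2^k}$.
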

\begin{corollary}
  The classes $C_{2^k}$ for $k \geq 3$ are pairwise distinct.
\end{corollary}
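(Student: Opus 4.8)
The plan is to deduce this corollary directly from the preceding corollary (and hence, ultimately, from the two lemmas it rests on) by a short index-shifting argument. It suffices to show that for any two integers $k_1, k_2$ with $3 \le k_1 < k_2$, the classes $C_{2^{k_1}}$ and $C_{2^{k_2}}$ are distinct, and for this I would exhibit a single graph lying in one class but not the other.

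The natural candidate is $G_{2^{k_2-1}}$. Since $k_2 > k_1 \ge 3$, we have $k_2 - 1 \ge 3$, so the preceding corollary applies with $k$ replaced by $k_2 - 1$: it tells us that $G_{2^{k_2-1}}$ is $(-1,1,2^{k_2})$-threshold, hence belongs to $C_{2^{k_2}}$, but is not $(-1,1,t)$-threshold for any $t \le 2^{k_2-1}$. Because $k_1 \le k_2 - 1$ gives $2^{k_1} \le 2^{k_2-1}$, this second part shows $G_{2^{k_2-1}} \notin C_{2^{k_1}}$. Therefore $G_{2^{k_2-1}} \in C_{2^{k_2}} \setminus C_{2^{k_1}}$, so $C_{2^{k_1}} \ne C_{2^{k_2}}$, which is what we wanted.

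There is essentially no obstacle here beyond bookkeeping with the indices; all the real content sits in the two lemmas already established. The powers of $2$ are chosen precisely so that the factor-of-roughly-$2$ gap in Lemma~\ref{lem:gp-lower} (between the constructive threshold $2p-3$ and the obstruction at $2p-5$) is comfortably absorbed by doubling $p$: indeed $2^{k_2-1} < 2^{k_2}-5$ once $k_2 - 1 \ge 3$. If one preferred to avoid routing through the preceding corollary, the identical conclusion follows by invoking the first lemma directly with $p = 2^{k_2-1}$ and $t = 2^{k_2}$ (valid since $2^{k_2} > 2p-3$) for membership in $C_{2^{k_2}}$, together with Lemma~\ref{lem:gp-lower} for the same $p$, noting $2^{k_1} \le 2^{k_2-1} < 2p-5$ to rule out membership in $C_{2^{k_1}}$.
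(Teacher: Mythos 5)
Your argument is correct and is essentially the intended deduction (the paper states this corollary without proof, as an immediate consequence of the preceding one): the graph $G_{2^{k_2-1}}$ separates $C_{2^{k_2}}$ from $C_{2^{k_1}}$, and your index checks ($k_2-1\ge 3$ and $2^{k_1}\le 2^{k_2-1}<2^{k_2}-5$) are all valid.
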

\begin{corollary}
  For all $t > 1$, there exist $2$-threshold graphs that are not $(-1,1,t)$-threshold graphs.
\end{corollary}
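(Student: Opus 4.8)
The plan is to combine Lemma~\ref{lem:gp-lower} with the observation, already recorded in the introduction, that $pK_2$ is always a $(-1,1)$-threshold graph (assign the endpoints of the $i$th edge the ranks $-2i$ and $2i$). In particular $G_p = pK_2$ is a $2$-threshold graph for every positive integer $p$, so the task reduces to producing, for each fixed $t>1$, some $p$ for which $G_p$ fails to be $(-1,1,t)$-threshold.

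Fix $t > 1$ and choose $p$ to be any integer with $p \geq (t+5)/2$; for concreteness take $p = \lceil (t+5)/2 \rceil$. Since $t > 1$, this forces $p > 3$, hence $p \geq 4$, so Lemma~\ref{lem:gp-lower} applies to $G_p$. By the choice of $p$ we have $t \leq 2p-5$, so the contrapositive of Lemma~\ref{lem:gp-lower} shows that $G_p$ is \emph{not} a $(-1,1,t)$-threshold graph. On the other hand $G_p = pK_2$ is a $(-1,1)$-threshold graph, hence a $2$-threshold graph, which gives exactly the desired conclusion.

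There is essentially no obstacle here beyond assembling pieces already in hand; the only points requiring care are (i) checking that the chosen $p$ satisfies $p \geq 4$ so that Lemma~\ref{lem:gp-lower} is applicable, which follows from $t > 1$, and (ii) keeping the inequality oriented correctly, i.e.\ arranging $t \leq 2p-5$ (the regime $t > 2p-3$ is the complementary one in which $G_p$ \emph{is} $(-1,1,t)$-threshold by the preceding lemma). One could equally well phrase the argument to yield infinitely many such graphs for each $t$ — every $G_p$ with $p \geq (t+5)/2$ works — but a single witness per value of $t$ already suffices for the statement.
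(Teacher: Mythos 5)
Your proposal is correct and is exactly the argument the paper intends: combine Lemma~\ref{lem:gp-lower} with the introduction's observation that $pK_2$ is $(-1,1)$-threshold, choosing $p$ large enough that $t \leq 2p-5$. The bookkeeping ($p \geq 4$ from $t>1$, and the direction of the inequality) is handled correctly.
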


\section{Threshold number versus threshold dimension}\label{sec:threshdim}
In this section, we partially answer Question~\ref{qu:thetat} by
proving that there exist graphs with $t(\overline{G}) = 3$ for which
$\Theta(G)$ is arbitrarily large. We will require the following result
of Cozzens and Leibowitz~\cite{cozzens} concerning the threshold
dimension of complete multipartite graphs:
\begin{theorem}[Cozzens--Leibowitz~\cite{cozzens}]\label{thm:cozzens}
  For positive integers $m_1 \leq \cdots \leq m_p$, the complete $p$-partite
  graph $K_{m_1,\ldots m_p}$ has threshold dimension $t(K_{m_1, \ldots, m_p}) = m_{p-1}$.
\end{theorem}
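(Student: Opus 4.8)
The plan is to prove the two bounds $t(K_{m_1,\ldots,m_p}) \ge m_{p-1}$ and $t(K_{m_1,\ldots,m_p}) \le m_{p-1}$ separately, the first by reducing to the complete \emph{bipartite} case and the second by an explicit construction that processes the parts from the largest to the smallest. For the lower bound I would first record that the threshold dimension is monotone under induced subgraphs: if $H$ is an induced subgraph of $G$ and $G = G_1 \cup \cdots \cup G_r$ with each $G_\ell$ threshold, then $H = G_1[V(H)] \cup \cdots \cup G_r[V(H)]$, and an induced subgraph of a threshold graph is again threshold. Taking the part $V_{p-1}$ together with any $m_{p-1}$ vertices of a largest part $V_p$ exhibits $K_{m_{p-1},m_{p-1}}$ as an induced subgraph of $K_{m_1,\ldots,m_p}$, so it suffices to prove that $t(K_{m,m}) \ge m$.

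To prove $t(K_{m,m}) \ge m$, suppose $K_{m,m}$, with sides $A = \{a_1,\ldots,a_m\}$ and $B = \{b_1,\ldots,b_m\}$, is the union of threshold graphs $G_1,\ldots,G_r$. Each $G_\ell$ is a bipartite graph with sides $A$ and $B$, and, being threshold, it has no induced $2K_2$; a short argument then shows that the neighborhoods $N_{G_\ell}(a_1),\ldots,N_{G_\ell}(a_m)$ are linearly ordered by inclusion. The key claim is that no $G_\ell$ can contain two edges of the diagonal matching $M = \{a_1b_1,\ldots,a_mb_m\}$: if $a_ib_i,a_jb_j \in E(G_\ell)$ with, say, $N_{G_\ell}(a_i) \subseteq N_{G_\ell}(a_j)$, then $a_jb_i \in E(G_\ell)$ as well, and then the set $\{a_i,a_j,b_i,b_j\}$ induces in $G_\ell$ either a $P_4$ (if $a_ib_j \notin E(G_\ell)$) or a $C_4$ (if $a_ib_j \in E(G_\ell)$), each of which is forbidden in a threshold graph. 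Since every edge of $M$ lies in some $G_\ell$ and each $G_\ell$ meets $M$ in at most one edge, $r \ge |M| = m$.

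For the upper bound I would construct threshold graphs $G_1,\ldots,G_{m_{p-1}}$ whose union is $K_{m_1,\ldots,m_p}$. Let $V_1,\ldots,V_p$ be the parts, indexed so that $V_p$ is largest. Because $|V_i| = m_i \le m_{p-1}$ for every $i \le p-1$, for each such $i$ I can choose vertices $w_1^{(i)},\ldots,w_{m_{p-1}}^{(i)} \in V_i$, allowing repetition, so that every vertex of $V_i$ appears in this list. Then define $G_\ell$ by the threshold construction in which one first adds all of $V_p$ as isolated vertices and then, for $i = p-1, p-2, \ldots, 1$ in turn, adds $w_\ell^{(i)}$ as a dominating vertex followed by the remaining vertices of $V_i$ as isolated vertices. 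A routine check shows that in $G_\ell$ the vertex $w_\ell^{(i)}$ is adjacent to exactly the vertices of $V_{i+1} \cup \cdots \cup V_p$ while every other vertex is isolated; in particular $G_\ell$ is a subgraph of $K_{m_1,\ldots,m_p}$. Given an edge $uv$ of $K_{m_1,\ldots,m_p}$ with $u \in V_i$, $v \in V_j$, and $i < j$, one then sees that $uv \in E(G_\ell)$ holds precisely when $u = w_\ell^{(i)}$; by the choice of representatives such an $\ell$ exists, so $\bigcup_{\ell} G_\ell = K_{m_1,\ldots,m_p}$.

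I expect the main obstacle to be the lower bound, and within it the recognition that the right object to extract is $K_{m_{p-1},m_{p-1}}$ (rather than the whole graph or a single star) together with the $P_4$/$C_4$ dichotomy that forces each threshold summand to absorb at most one ``parallel'' matching edge; once that is isolated, the bound falls out immediately. The upper bound is then largely bookkeeping, the one idea being to handle the parts from the large end so that a single representative of each small part can be made to dominate all the parts that come after it.
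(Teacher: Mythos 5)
This theorem is one the paper imports from Cozzens and Leibowitz and states without proof, so there is no in-paper argument to compare yours against; I can only assess your proof on its own terms, and it is correct. The lower bound is sound: threshold dimension is monotone under induced subgraphs, $K_{m_{p-1},m_{p-1}}$ sits inside $K_{m_1,\ldots,m_p}$ as the part $V_{p-1}$ plus $m_{p-1}$ vertices of $V_p$, and your observation that a threshold subgraph of $K_{m,m}$ (being bipartite with nested neighborhoods on each side) can contain at most one edge of the diagonal matching --- because a second matching edge forces an induced $P_4$ or $C_4$ --- gives $t(K_{m,m}) \geq m$. The upper bound construction also works. One small inaccuracy there: in $G_\ell$ the neighborhood of $w_\ell^{(i)}$ is not \emph{exactly} $V_{i+1} \cup \cdots \cup V_p$, since each dominating vertex $w_\ell^{(j)}$ with $j < i$ added later is joined to everything already present, so $w_\ell^{(i)}$ is also adjacent to $w_\ell^{(j)}$ for $j < i$; likewise ``every other vertex is isolated'' is not literally true. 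This costs you nothing, however: the extra edges still join distinct parts, so $G_\ell \subseteq K_{m_1,\ldots,m_p}$, and your key characterization --- that an edge $uv$ with $u \in V_i$, $v \in V_j$, $i < j$ lies in $G_\ell$ precisely when $u = w_\ell^{(i)}$ --- remains valid, since a non-representative vertex of $V_i$ is only ever adjacent to dominating vertices added after it, all of which are representatives of parts indexed below $i$. With that wording tightened, the proof is complete and self-contained.
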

Let $G = pK_3$. Applying Theorem~\ref{thm:cozzens} with all $m_i = 3$ shows
that $t(\overline{G}) = 3$. Therefore, to show that $\Theta(G)$ can be arbitrarily
large for graphs with $t(\overline{G}) = 3$, it suffices to prove the following
theorem.
\begin{theorem}\label{thm:6p13}
  If $pK_3$ is a $k$-threshold graph, then $p \leq {k+2 \choose 3}$.
  In particular, $\mth(pK_3) \geq \frac{1}{2}(6p)^{1/3}$.
\end{theorem}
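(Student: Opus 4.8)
The plan is to assign to each triangle of $pK_3$ a nondecreasing triple of indices from $\{1,\dots,k\}$, to show that distinct triangles receive distinct triples, and to conclude that $p$ is at most the number $\binom{k+2}{3}$ of such triples. To set up, I would fix a $(\theta_1,\dots,\theta_k)$-representation of $pK_3$ and, for each real number $w$, write $\lambda(w)=|\{i : \theta_i\le w\}|$, so that $\lambda$ is nondecreasing and a pair $uv$ is an edge if and only if $\lambda(u+v)$ is odd. The one fact I would use repeatedly is that, because $\lambda$ is nondecreasing, $w\le s\le w'$ together with $\lambda(w)=\lambda(w')$ forces $\lambda(s)=\lambda(w)$. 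In particular, if $uv$ and $u'v'$ are both edges with $\lambda(u+v)=\lambda(u'+v')$ and some pair $xy$ has weight $x+y$ lying between $u+v$ and $u'+v'$, then $xy$ is also an edge.

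For a triangle $T$, order its vertices as $a\le b\le c$; then the three edge weights satisfy $a+b\le a+c\le b+c$, all three are edges, and every edge weight lies in $[\theta_1,\infty)$, so the triple $\tau(T):=(\lambda(a+b),\lambda(a+c),\lambda(b+c))$ is nondecreasing with entries in $\{1,\dots,k\}$. There are exactly $\binom{k+2}{3}$ such triples (in fact the entries are all odd, so one could even replace $k$ by $\lceil k/2\rceil$, but the stated bound suffices), so it is enough to show that $\tau$ is injective on the triangles.

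Establishing injectivity is the heart of the argument, and the step I expect to be the main obstacle. Suppose $T\ne T'$ but $\tau(T)=\tau(T')=(i,j,\ell)$; writing the vertices of $T$ as $a\le b\le c$ and those of $T'$ as $a'\le b'\le c'$, I would assume without loss of generality that $c\le c'$ and then split into three cases, in each of which I exhibit an edge of the representation joining $T$ to $T'$ --- impossible, since distinct triangles of $pK_3$ are distinct components. If $b\le b'$, then $b+c\le b+c'\le b'+c'$, and since $\lambda(b+c)=\lambda(b'+c')=\ell$ the pair $bc'$ is an edge. If $b>b'$ and $a\le a'$, then $a+c\le a+c'\le a'+c'$ with $\lambda(a+c)=\lambda(a'+c')=j$, so $ac'$ is an edge. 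If $b>b'$ and $a>a'$, then $a'+b'\le a'+b\le a+b$ with $\lambda(a'+b')=\lambda(a+b)=i$, so $a'b$ is an edge. Each of the ``sandwich'' inequalities follows by simply adding the case hypotheses; the one point requiring care is the choice, in each case, of the cross pair whose weight is bracketed by two edges sharing a common value of $\lambda$.

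Once $\tau$ is known to be injective, we have $p\le\binom{k+2}{3}=\tfrac16 k(k+1)(k+2)\le\tfrac16(2k)^3=\tfrac43 k^3$, so $k\ge(3p/4)^{1/3}=\tfrac12(6p)^{1/3}$; since this holds for every $k$ such that $pK_3$ is a $k$-threshold graph, it follows that $\Theta(pK_3)\ge\tfrac12(6p)^{1/3}$.
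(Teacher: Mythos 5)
Your proof is correct and takes essentially the same approach as the paper: the paper's key lemma assigns to each triangle the multiset of threshold intervals (colors) containing its three edge weights --- equivalent to your nondecreasing triple $\bigl(\lambda(a+b),\lambda(a+c),\lambda(b+c)\bigr)$ --- proves injectivity by the same sandwiching of a cross-pair weight between two equal-colored edges, and counts multisets by stars and bars to get $\binom{k+2}{3}$. The only cosmetic difference is that you anchor the case analysis at the largest-rank vertices ($c\le c'$) and split into three cases, whereas the paper anchors at the smallest ($x_1\le y_1$) and derives a chain of two claims; the underlying technique is identical.
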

Note that the first part of the theorem is stronger than the second part,
which is obtained using a crude lower bound on ${k+2 \choose 3}$.

To prove this theorem, we will use a lemma stated in terms of edge
colorings (not necessarily proper) induced by a threshold representation.
Given a $(\theta_1, \ldots, \theta_k)$-representation of $pK_3$, we assign
colors $1, \ldots, \floor{k/2}$ to the edges of $pK_3$ by
giving edge $e$ color $i$ if its weight lies in the interval
$[\theta_{2i-1}, \theta_{2i})$.  (By the definition of a
$(\theta_1, \ldots, \theta_k)$-representation, for each edge there is
exactly one such $i$.)

Now, given an edge coloring, we can view each triangle as inducing a
multiset of colors on its edges (for example, we consider ``$2$ red
edges and $1$ yellow edge'' and ``$1$ red edge and $2$ yellow edges''
as different multisets, despite having the same underlying set).
\begin{lemma}\label{lem:rainbow}
  In a $(\theta_1, \ldots, \theta_k)$-representation of $pK_3$, no two triangles have the same multiset of colors appearing on their edges.
\end{lemma}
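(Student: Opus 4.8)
The plan is to suppose, for contradiction, that two distinct triangles $T = xyz$ and $T' = x'y'z'$ induce the same multiset of colors, and derive a contradiction by comparing weights across the two triangles. The key point is that within a single triangle, the color-$i$ edges are exactly the edges whose weight lies in $[\theta_{2i-1}, \theta_{2i})$, so sorting the three edges of a triangle by color sorts them (weakly) by weight; moreover, since in $pK_3$ distinct triangles are vertex-disjoint, all six vertices $x, y, z, x', y', z'$ are distinct, and every ``cross'' pair (one vertex from $T$, one from $T'$) is a non-edge, so its weight must avoid all the colored intervals $\bigcup_i [\theta_{2i-1}, \theta_{2i})$.

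First I would set up notation: order the vertices of $T$ so that $x \le y \le z$ and the vertices of $T'$ so that $x' \le y' \le z'$; without loss of generality assume $z \le z'$ (so $z'$ is the largest of the six ranks, or tied for largest). The three edge weights of $T$ are $x+y \le x+z \le y+z$ and similarly for $T'$. Because the two triangles have the same color multiset, and because within each triangle the edges are sorted into color classes by weight, the edge of $T$ carrying the $j$th-smallest color equals (in color) the edge of $T'$ carrying the $j$th-smallest color — so I can pair up the edges of $T$ with the edges of $T'$ so that paired edges have the same color, hence lie in the same interval $[\theta_{2i-1}, \theta_{2i})$.

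Next, the heart of the argument: I would compare the largest-weight edges. The edge $y' + z'$ has weight at least as large as $y + z$ is false in general, so instead I would argue via the cross non-edges. Consider the non-edge $y + z'$. It must avoid every colored interval. But $y + z' \ge y + z$, which lies in some colored interval $[\theta_{2i-1}, \theta_{2i})$ (the color of edge $yz$), and $y + z' \le z + z'$... this needs care. The cleanest route, I expect, is: let $i$ be the largest color appearing in the common multiset, realized on edge $yz$ of $T$ (the max-weight edge of $T$, since its color is maximal means its weight is at least that of the other two edges — actually one must check this; edges with a larger color index have strictly larger weight, so the maximum-color edge is the maximum-weight edge). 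Similarly $y'z'$ is the max-weight edge of $T'$ and also has color $i$, so both $y+z$ and $y'+z'$ lie in $[\theta_{2i-1}, \theta_{2i})$. Now the non-edge $y + z'$ satisfies $y + z' \ge \max(y+z, \text{something})$... I would instead note $z' \ge z$ and $z' \ge y'$, so $y + z' \ge y+z \ge \theta_{2i-1}$; and I need an upper bound $y + z' < \theta_{2i}$ to place it in the same interval, giving the contradiction that a non-edge has a valid edge-color. For the upper bound: $y \le z$ and $z' \le z + z' $... I think the right comparison uses the non-edges $x' + z$ or $y' + z$ together with a counting/pigeonhole on which of the six cross-pairs can be ``large''.

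The main obstacle I anticipate is exactly this last inequality-juggling: turning ``same color multiset'' plus ``$z'$ is the global maximum'' into a cross non-edge whose weight is forced into one of the colored intervals. I expect the clean formulation is to look at the non-edges joining $\{y, z\}$ to $\{y', z'\}$: these four pairs have weights summing to $2(y+z) + 2(y'+z') - \ldots$ wait, more usefully, $\{x,y,z\}$ paired with $z'$ gives three cross non-edges $x+z'$, $y+z'$, $z+z'$, all of which must avoid the colored intervals; since $z' \ge z \ge y \ge x$ we have $z+z' \ge y+z' \ge x+z' \ge x + z$, and $x+z$ lies in a colored interval (the color of edge $xz$). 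So $x + z'$ lies weakly above $\theta_{2j-1}$ where $j$ is the color of $xz$; to contradict we want $x+z' < \theta_{2j}$, i.e. that $z'$ is not so much larger than $z$ that $x+z'$ overshoots the top of that interval. If it does overshoot, then $x + z'$ has jumped past interval $j$; but then comparing with the corresponding edge $x' + z'$ (or $x'+y'$, whichever has color $j$ in $T'$) which lies in $[\theta_{2j-1}, \theta_{2j})$, and using $x \le x'$ or $x \le z'$ appropriately, I would derive $z'$ must in fact be bounded, closing the loop. I would write this out carefully by fixing the color $j_0 = $ the smallest color in the common multiset and tracking the minimum-weight edges symmetrically, so that a two-sided squeeze on the cross non-edge $x + z'$ yields the contradiction.
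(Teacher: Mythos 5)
Your setup is the same as the paper's (sort each triangle's vertices, observe that the weight-sorted edges of the two triangles must then match color for color, and aim to force some cross non-edge into a colored interval), but the proof is not actually completed: the decisive inequalities are left as ``I would derive \dots closing the loop,'' and the one concrete squeeze you do propose does not work as stated. With $x\le y\le z$, $x'\le y'\le z'$ and $z\le z'$, your bound $x+z'\ge x+z\ge\theta_{2j-1}$ is fine, but the needed upper bound $x+z'<\theta_{2j}$ is simply not implied by the hypotheses: knowing that $x+z$ and the matching edge of $T'$ both lie in $[\theta_{2j-1},\theta_{2j})$ constrains the edge \emph{weights}, not how the individual ranks interleave, and $z'$ can exceed $z$ by enough that $x+z'$ escapes the interval. (Concretely, for the top-color edges $yz$ and $y'z'$, the cross weight $y+z'$ lands in $[\,y+z,\;y'+z'\,]$ only when $y\le y'$; in the case $y'<y$ neither $y+z'$ nor $y'+z$ is trapped by a one-step comparison, which is exactly the case your sketch does not handle.)

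The paper closes this gap with a specific three-step cascade that you would need to reproduce. Normalizing on the \emph{minimum} ranks ($x_1\le y_1$ in the paper's notation), it first uses the smallest-color edges $x_1x_2$ and $y_1y_2$ to show $y_2<x_2$ (if $x_2\le y_2$ then $\theta_\alpha\le x_1+x_2\le x_1+y_2\le y_1+y_2<\theta_{\alpha+1}$, forcing the non-edge $x_1y_2$ to be an edge); it then feeds $y_2<x_2$ into the largest-color edges to show $x_3<y_3$; and only then does the middle-color comparison produce the final forbidden edge $y_1x_3$. So the interleaving of individual ranks is \emph{derived}, claim by claim, before any cross non-edge can be squeezed. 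Your instinct that ``a two-sided squeeze on a single cross pair'' should suffice is the wrong expectation; until you supply this chain (or an equivalent one), the argument has a genuine hole at its central step.
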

Before proving the lemma, we show how the proof of the theorem follows
immediately.
\begin{proof}[Proof of Theorem~\ref{thm:6p13}]
  When $p=1$, both parts of the theorem clearly hold, since $k \geq 1$ is required.
  For $p \geq 2$, observe that $pK_3$ has an induced $2K_2$ and thus is not a threshold
  graph; thus, we may assume $k \geq 2$.
  
  Assume $pK_3$ has a $(\theta_1, \ldots, \theta_k)$-representation. By Lemma~\ref{lem:rainbow}, no two triangles
  have the same multiset of colors on their edges. Hence, by the pigeonhole principle,
  the number of triangles is at most the number of size-$3$ multisets from $\{1, \ldots, k\}$,
  which by the standard stars-and-bars argument is ${k+2 \choose 3}$. Since $k \geq 2$, we have
  ${k+2 \choose 3} \leq {2k \choose 3} \leq (2k)^3/6$. Rearranging $p \leq (2k)^3/6$ gives the
  desired inequality.
\end{proof}
\begin{proof}[Proof of Lemma~\ref{lem:rainbow}]
  Suppose to the contrary that two triangles $x_1,x_2,x_3$
  and $y_1,y_2,y_3$ have the same multiset of colors on their edges.
  Without loss of generality, we may assume that:
  \begin{itemize}
  \item $x_1 \leq x_2 \leq x_3$,
  \item $y_1 \leq y_2 \leq y_3$, and
  \item $x_1 \leq y_1$.
  \end{itemize}
  Choose indices $\alpha, \beta, \gamma \in \{1, \ldots, k\}$ so that
  $x_1x_2$ has weight in $[\theta_{\alpha}, \theta_{\alpha+1})$, $x_1x_2$ has
  weight in $[\theta_{\beta}, \theta_{\beta+1})$, and $x_2x_3$ has weight in
  $[\theta_{\gamma}, \theta_{\gamma+1})$. (For convenience, we will adopt the convention that $\theta_{k+1} = \infty$.)  Observe that $x_1 \leq x_2 \leq x_3$
  forces $\alpha \leq \beta \leq \gamma$.

  Say an edge with weight in $[\theta_{\alpha}, \theta_{\alpha+1})$ is \emph{red},
  an edge with weight in $[\theta_{\beta}, \theta_{\beta+1})$ is \emph{yellow},
  and an edge with weight in $[\theta_{\gamma}, \theta_{\gamma+1})$ is \emph{pink}.
  (It is \emph{possible} that some of these thresholds coincide,
  in which case an edge may be, say, both red and yellow.)

  Since $y_1 \leq y_2 \leq y_3$ and the $y$-edges have the same multiset of
  colors as the $x$-edges, the colors of the $y$-edges must agree
  with the colors of the corresponding $x$-edges:
  \begin{itemize}
  \item $x_1x_2$ and $y_1y_2$ are red,
  \item $x_1x_3$ and $y_1y_3$ are yellow,
  \item $x_2x_3$ and $y_2y_3$ are pink.
  \end{itemize}
  Now we will derive our contradiction using the absence of the
  $x_iy_j$-edges.
  
  \textbf{Claim 1: $y_2 < x_2$.} If instead $x_2 \leq y_2$, then we
  have
  \[ \theta_{\alpha} \leq x_1 + x_2 \leq x_1 + y_2 \leq y_1 + y_2 < \theta_{\alpha+1}, \]
  forcing a red $x_1y_2$-edge, a contradiction. \medskip

  \textbf{Claim 2: $x_3 < y_3$.} If instead $y_3 \leq x_3$, then since
  $y_2 < x_2$, we have
  \[ \theta_{\beta} \leq y_2 + y_3 \leq y_2 + x_3 < x_2 + x_3 < \theta_{\beta+1}, \]
  forcing a pink $y_2x_3$-edge, a contradiction. \medskip

  Now since $x_3 < y_3$, we have
  \[ \theta_{\gamma} \leq x_1 + x_3 \leq y_1 + x_3 < y_2 + y_3 < \theta_{\gamma+1}, \]
  forcing a yellow $y_1x_3$-edge, again a contradiction. This
  completes the proof.
\end{proof}

\section{Remarks and Open Questions}\label{sec:openquestions}
After being informed of a preliminary version of the results in
Section~\ref{sec:11t}, Jamison (personal communication) suggested
studying the class $D = \bigcap_{t > 1}C_t$, where $C_t$ is the class
of $(-1,1,t)$-threshold graphs.

Intuition suggests that perhaps $D$ is related somehow to the class of
$2$-threshold graphs. Since $G_p$ is a $2$-threshold graph for all $p$,
Lemma~\ref{lem:gp-lower} implies that not all $2$-threshold graphs lie
in the class $D$. On the other hand, since all $2$-threshold graphs
satisfy $t(\overline{G}) \leq 2$, and since Theorem~\ref{thm:cozzens}
implies that $t(\overline{2K_3}) = 3$, we see that $2K_3$ is not a $2$-threshold graph;
however, $2K_3 \in D$, as the ranking in Figure~\ref{fig:2k3} can easily be verified
to be a $(-1, 1, t)$-representation for $2K_3$ whenever $\epsilon$ is sufficiently
small (in terms of $t$). Thus, $2K_3 \in D$ but $2K_3$ is not $2$-threshold;
the two classes are incomparable.
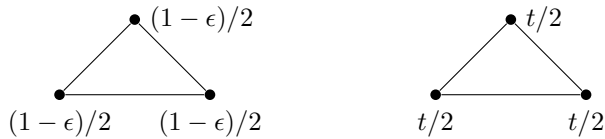
\begin{figure}
  \centering
    \begin{tikzpicture}
      \rpoint{(1-\epsilon)/2} (x1) at (1cm, 1cm) {};
      \bpoint{(1-\epsilon)/2} (y1) at (0cm, 0cm) {};
      \bpoint{(1-\epsilon)/2} (z1) at (2cm, 0cm) {};
      \begin{scope}[xshift=2cm]
        \rpoint{t/2} (x2) at (4cm, 1cm) {};
        \bpoint{t/2} (y2) at (5cm, 0cm) {};
        \bpoint{t/2} (z2) at (3cm, 0cm) {};        
      \end{scope}

      \draw (x1) -- (y1) -- (z1) -- (x1);
      \draw (x2) -- (y2) -- (z2) -- (x2);
    \end{tikzpicture}  
    \caption{$(-1,1,t)$-threshold ranking of $2K_3$.}
    \label{fig:2k3}
\end{figure}
\begin{openquestion}
  Is there a nice characterization of the class $D$?
\end{openquestion}
While the results in Section~\ref{sec:openquestions} imply that
there are at least countably many distinct classes $C_t$, it is not
clear whether there are countably many distinct classes or uncountably many distinct classes.
Indeed, it seems plausible that $C_t \neq C_{t'}$ whenever $t,t'$ are distinct real numbers
exceeding $1$.
\begin{openquestion}
  Are there uncountably many distinct classes $C_t$?
\end{openquestion}
\begin{openquestion}
  Are there distinct real numbers $t,t' > 1$ such that $C_t = C_{t'}$?
\end{openquestion}
Theorem~\ref{thm:6p13} only partially answers Question~\ref{qu:thetat},
which seeks a bound of $\Theta(G)$ in terms of $t(\overline{G})$ or $t(G)$. In particular,
the following questions remain open:
\begin{openquestion}
  Is $\Theta(G)$ bounded on the class of graphs $G$ with $t(\overline{G}) = 2$?
\end{openquestion}
\begin{openquestion}
  Is $\Theta(G)$ bounded by any function of $t(G)$?
\end{openquestion}



\section{Acknowledgments}
I thank Robert E.~Jamison both for posing the original
Question~\ref{qu:jamison} that motivated this paper as well as the
thought-provoking followup questions \ref{qu:2threshdim} and
\ref{qu:thetat} after receiving a preliminary version of these
results.

\bibliographystyle{amsplain}\bibliography{bibmult}
\end{document}